\title{$C^*$-norms for tensor products of discrete group $C^*$-algebras}
\author{Matthew Wiersma}
\address{Department of Pure Mathematics, University of Waterloo, Waterloo, ON, Canada N2L 3G1}
\email{mwiersma@uwaterloo.ca}
\newtheorem{theorem}{Theorem}[section]
\newtheorem{prop}[theorem]{Proposition}
\newtheorem{lemma}[theorem]{Lemma}
\theoremstyle{remark}
\newtheorem{remark}[theorem]{Remark}
\theoremstyle{definition}
\newcommand{\fn}{\!:}
\newcommand{\C}{\mathbb C}
\newcommand{\Hi}{\mathcal{H}}
\newcommand{\lla}{\left\langle}
\newcommand{\rra}{\right\rangle}
\newcommand{\mc}{\mathcal}
\newcommand{\tn}{\textnormal}
\newcommand{\F}{\mathbb F}
\begin{document}
\maketitle

\begin{abstract}
Let $\Gamma$ be a discrete group. We show that if $\Gamma$ is nonamenable, then the algebraic tensor products $C^*_r(\Gamma)\otimes C^*_r(\Gamma)$ and $C^*(\Gamma)\otimes C^*_r(\Gamma)$ do not admit unique $C^*$-norms. Moreover, when $\Gamma_1$ and $\Gamma_2$ are discrete groups containing copies of noncommutative free groups, then $C^*_r(\Gamma_1)\otimes C^*_r(\Gamma_2)$ and $C^*(\Gamma_1)\otimes C_r^*(\Gamma_2)$ admit $2^{\aleph_0}$ $C^*$-norms. Analogues of these results continue to hold when these familiar group $C^*$-algebras are replaced by appropriate intermediate group $C^*$-algebras.
\end{abstract}

\maketitle

\section{Introduction}
Let $\mc A$ and $\mc B$ be $C^*$-algebras. It is always possible to put a $C^*$-norm on the algebraic tensor product $\mc A\otimes \mc B$. For example, the spatial (or minimal) tensor product norm $\|\cdot\|_{\min}$ and the maximal tensor product $\|\cdot\|_{\max}$ are always $C^*$-norms on $\mc A\otimes\mc B$. As the names suggest, the spatial tensor norm is the smallest $C^*$-norm one can place on $\mc A\otimes \mc B$ and the maximal is the largest. In general these norms do not agree. The $C^*$-algebra $\mc A$ is said to be nuclear if $\mc A\otimes \mc B$ admits a unique $C^*$-norm for all choices of $C^*$-algebras $\mc B$ or, equivalently, if $\|\cdot\|_{\max}=\|\cdot\|_{\min}$ on $\mc A\otimes\mc B$ for every $C^*$-algebra $\mc B$.

Let $G$ be a locally compact group. If $G$ is amenable, then $C^*_r(G)$ is a nuclear $C$*-algebra. The converse of this theorem is false in general since, for instance, $C^*_r(G)$ is nuclear for every type I group $G$ \cite{t}. However, Lance showed that this condition characterizes amenability in the case of discrete groups \cite{l}. Since the quotient of a nuclear $C^*$-algebra is nuclear, this also gives the characterization that a discrete group $\Gamma$ is amenable if and only if $C^*(\Gamma)$ is nuclear. Lance's proof, however, does not indicate a $C^*$-algebra $\mc B$ such that $C^*_r(\Gamma)\otimes \mc B$ does not have a unique norm when $\Gamma$ is nonamenable. We show that the algebraic tensor products $C^*_r(\Gamma)\otimes C^*_r(\Gamma)$ and $C^*(\Gamma)\otimes C^*_r(\Gamma)$ do not admit a unique $C^*$-norm when $\Gamma$ is a nonamenable discrete group. The analogue of this result holds when $C^*(\Gamma)$ and $C^*_r(\Gamma)$ are replaced by appropriate intermediate group $C^*$-algebras of $\Gamma$. We do not determine whether $C^*(\Gamma)\otimes C^*(\Gamma)$ admits a unique $C^*$-norm for nonamenable $\Gamma$, but note that finding the solution to this problem for the case when $\Gamma=\F_\infty$ would solve the Connes embedding problem \cite{k}.

Recently Narutaka Ozawa and Gilles Pisier demonstrated pairs of $C^*$-algebras $\mc A$ and $\mc B$ such that $\mc A\otimes \mc B$ admits $2^{\aleph_0}$ distinct $C^*$-norms \cite{op}, including the case when $\mc A=\mc B= \mc B(\Hi)$. Although the paper mainly focuses on von Neumann algebras, Ozawa and Pisier also show that $C^*_r(\F_d)\otimes C^*_r(\F_d)$ admits $2^{\aleph_0}$ distinct $C^*$-norms where $\F_d$ is the noncommutative free group on $d\geq 2$ generators. We generalize this result by showing that $C^*_r(\Gamma_1)\otimes C^*_r(\Gamma_2)$ and $C^*(\Gamma_1)\otimes C^*_r(\Gamma_2)$ admit $2^{\aleph_0}$ distinct $C^*$-norms for every pair of discrete groups $\Gamma_1$ and $\Gamma_2$ containing copies of noncommutative free groups. Again, analogues of these results hold when $C^*(\Gamma_1)$ and $C^*_r(\Gamma_2)$ are replaced by intermediate group $C^*$-algebras of $\Gamma_1$ and $\Gamma_2$. Our approach to finding these $C^*$-norms is different from Ozawa and Pisier's, whose proof relies heavily on the simplicity of $C^*_r(\F_d)$.

We have mentioned the term `appropriate intermediate group $C^*$-algebra' a couple times now. Let $G$ be a locally compact group. By an intermediate group $C^*$-algebra of $G$, we mean a $C^*$-completion of $L^1(G)$ with respect to a norm dominating the reduced norm. This class includes the full and reduced group $C^*$-algebras $C^*_r(G)$ and $C^*(G)$. Nate Brown and Eric Guentner have recently defined a very natural class of intermediate group $C^*$-algebras $C^*_{\ell^p}(\Gamma)$ ($1\leq p<\infty$) for discrete groups $\Gamma$ \cite{bg} with the property that $C^*_{\ell^p}(\F_2)$ are distinct for every $p\geq 2$ \cite{o}. What we are actually able to show is that if $\mc A$ is any intermediate group $C^*$-algebra for a nonamenable discrete group $\Gamma$, then $\mc A\otimes C^*_{\ell^p}(\Gamma)$ does not admit a unique $C^*$-norm, and if $\Gamma_1$ and $\Gamma_2$ are discrete groups containing copies of $\F_2$ and $\mc A$ is an intermediate group $C^*$-algebra of $\Gamma_1$, then $\mc A\otimes C^*_{\ell^p}(\Gamma_2)$ admits $2^{\aleph_0}$ distinct $C^*$-norms.

Throughout this paper we will assume a basic familiarity with $C^*$-tensor products, such as provided in \cite{bo}. The main tools used throughout this paper are the theory of coefficient spaces as developed by Eymard \cite{e} and Arsac \cite{a}, and the recently developed theory of $\ell^p$-representations \cite{bg}. See also \cite{kmq}. The primary purpose of the following section is to familiarize the reader with coefficient spaces as required by this paper, and the section is ended with a characterization of tensor products in terms of these spaces. In the third and final section of the paper we give (remarkably short) proofs of the earlier mentioned results, introducing the theory of $\ell^p$-representations where needed.

\section{Coefficient spaces}

In this section we provide a brief introduction to the theory of coefficient spaces as required by this paper and the results mentioned here will be used implicitly throughout the rest of the paper. The reader should see the papers of Arsac \cite{a} and Eymard \cite{e} for more thorough treatments of this subject. The section ends with a characterization of $C^*$-norms on tensor products in terms of these coefficient spaces.

Let $G$ be a locally compact group. The Fourier-Stieltjes algebra $B(G)$ is defined to be the set of coefficient functions $s\mapsto \pi_{x,y}(s):=\lla \pi(s)x,y\rra$ where $\pi\fn G\to \mc B(\Hi_\pi)$ ranges over the (strongly continuous unitary) representations of $G$ and $x,y$ over $\Hi_\pi$. $B(G)$ is naturally identified with the dual space of $C^*(G)$ via the pairing
$<u,f>\,:=\int u(s)f(s)\,ds$ for $u\in B(G)$ and $f\in L^1(G)$. With the norm $B(G)$ attains from being the dual of $C^*(G)$ and pointwise multiplication, the Fourier-Stieltjes algebra becomes a Banach algebra.

Let $\pi\fn G\to \mc B(\Hi)$ be a fixed representation of $G$. The space $A_\pi$ is defined to be the closed linear span of coefficient functions $\pi_{x,y}$ for $x,y\in\Hi$. Then $A_\pi$ is a translation invariant space (under both left and right translations). Moreover, every norm closed translation invariant subspace of $B(G)$ arises in this way. As a distinguished case, the Fourier algebra $A(G)$ is defined to be the space $A_\lambda$ where $\lambda$ denotes the left regular representation of $G$. Although not obvious, it is a consequence of Fell's absorption principle that the Fourier algebra is an ideal of $B(G)$.

Fix a representation $\pi\fn G\to \mc B(\Hi)$ and let $C^*_\pi$ denote the norm closure of $\pi(L^1(G))$ in $\mc B(\Hi)$. Let $B_\pi$ denote the closure of $A_\pi$ with respect to the weak* topology $\sigma(B(G),C^*(G))$. Then $B_\pi$ can be identified as the dual of $C^*_\pi$ via the pairing $<u,\pi(f)>\,=\int u(s)f(s)\,ds$. If $\sigma$ is another representation of $G$, then $B_\pi\subset B_\sigma$ if and only if $\|\pi(f)\|\leq \|\sigma(f)\|$, i.e., if and only if $\pi$ is weakly contained $\sigma$.

Below we list the properties of these spaces which we will make use of:
\begin{itemize}
\item Suppose $H$ is an open subgroup of $G$. Then we can extend every element $u\in B(H)$ to an element $\dot{u}$ in $B(G)$ by defining $\dot u(s)=0$ for $s\in G\backslash H$. The map $u\mapsto \dot u$ is an isometry.
\item If $H$ is a closed subgroup of $G$ and $\pi$ is a representation of $G$, then $A_\pi|_H=A_{\pi|_H}$ \cite{a}. If $H$ is assumed to be open, then $B_\pi|_H=B_{\pi|_H}$.
\item (Herz's restriction theorem) If $H$ is a closed subgroup of $G$, then $A(G)|_H=A(H)$ \cite{a} (see also \cite{he} and \cite{tt}).
\item If $\sigma$ is an amplification of a representation $\pi$ of $G$, then $A_{\sigma}=A_\pi$ \cite{a}.
\item $A(G)$ is the norm closure of $C_c(G)\cap B(G)$ \cite{e}.
\item $G$ is amenable if and only if $B_\lambda$ contains the constant 1 \cite{hu}.
\item Let $\pi$ and $\sigma$ be representations of $G$. Then $A_{\pi\oplus \sigma}=A_\pi+A_\sigma$ and $B_{\pi\oplus \sigma}=B_\pi+B_\sigma$ \cite{a}.
\end{itemize}

We finish this section with a characterization on $C^*$-norms on tensor products of $C^*$-algebras associated to groups. Recall that the Banach space projective tensor product $X\widehat{\otimes} Y$ of Banach spaces $X$ and $Y$ is the completion of the algebraic tensor product $X\otimes Y$ with respect to the norm $\big\|z\|=\inf\{\sum_{i=1}^n\|x_i\|\|y_i\| : z=\sum_{i=1}^n x_i\otimes y_i\}$.

\begin{prop}\label{characterization}
Let $G_1$ and $G_2$ be locally compact groups and $\pi_1,\pi_2$ be representations of $G_1$ and $G_2$. Then we can identify the $C^*$-norms on $C^*_{\pi_1}(G_1)\otimes C^*_{\pi_2}(G_2)$ with the weak*-closed translation invariant subspaces $B_\sigma$ of $B(G_1\times G_2)$ such that $B_\sigma|_{G_1}=B_{\pi_1}$, $B_\sigma|_{G_2}=B_{\pi_2}$, and $B_\sigma\supset B_{\pi_1\times\pi_2}$. For $f_1,\ldots,f_n\in L^1(G_1)$ and $g_1,\ldots, g_n\in L^1(G_2)$, the norm of $\sum_{i=1}^n \pi_1(f_i)\otimes \pi_2(g_i)$ associated to $B_\sigma$ is given by
$$ \bigg\|\sum_{i=1}^n \pi_1(f_i)\otimes \pi_2(g_i)\bigg\|=\bigg\|\sum_{i=1}^n \sigma(f_i\times g_i)\bigg\|.$$

\end{prop}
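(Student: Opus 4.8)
The plan is to set up a bijection between $C^*$-norms on the algebraic tensor product $C^*_{\pi_1}(G_1)\otimes C^*_{\pi_2}(G_2)$ and representations $\sigma$ of $G_1\times G_2$, taken up to weak equivalence (equivalently, up to the coefficient space $B_\sigma$), and then to translate the defining features of such a $\sigma$ into the three listed conditions. The starting point is the standard fact that a $C^*$-norm $\alpha$ is recorded by a faithful representation of its completion, which decomposes as a pair of commuting representations $\rho_1,\rho_2$ of the two factors; a representation of $C^*_{\pi_i}(G_i)$ is exactly a representation of $G_i$ weakly contained in $\pi_i$. Two such commuting representations assemble into a single representation $\sigma$ of $G_1\times G_2$ whose restrictions $\sigma|_{G_1}$ and $\sigma|_{G_2}$ factor through $C^*_{\pi_1}(G_1)$ and $C^*_{\pi_2}(G_2)$. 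Since $\sigma(s,e)$ and $\sigma(e,t)$ commute with product $\sigma(s,t)$, integration yields $\sigma(f\times g)=\sigma|_{G_1}(f)\,\sigma|_{G_2}(g)$, so the displayed norm formula is automatic once the correspondence is in place.

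For the forward direction I would begin with a $C^*$-norm $\alpha$. Because $\alpha\le\|\cdot\|_{\max}$ and $C^*(G_1)\otimes_{\max}C^*(G_2)=C^*(G_1\times G_2)$, the $\alpha$-completion is a quotient of $C^*(G_1\times G_2)$ and hence equals $C^*_\sigma$ for a representation $\sigma$ of $G_1\times G_2$, whose dual is $B_\sigma$. Condition $B_\sigma\supset B_{\pi_1\times\pi_2}$ is then read as $\alpha\ge\|\cdot\|_{\min}$, using that the minimal norm corresponds precisely to the exterior product $\pi_1\times\pi_2$ on $\Hi_{\pi_1}\otimes\Hi_{\pi_2}$ and that $B_\tau\subset B_\sigma$ is equivalent to weak containment $\tau\prec\sigma$. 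The equalities $B_\sigma|_{G_i}=B_{\pi_i}$ would come from the fact that every $C^*$-norm is a cross norm, so each inclusion $C^*_{\pi_i}(G_i)\hookrightarrow C^*_\sigma(G_1\times G_2)$ is isometric; dualizing an isometric $*$-monomorphism of $C^*$-algebras to a metric surjection of dual spaces, and identifying this adjoint with the restriction map $u\mapsto u|_{G_i}$ of coefficient functions, converts ``isometric inclusion'' into the stated equality of restricted coefficient spaces.

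For the converse I would take a weak*-closed translation invariant subspace $B_\sigma$ satisfying the three conditions and run the argument in reverse. Using the coefficient space restriction identity $A_\sigma|_{G_i}=A_{\sigma|_{G_i}}$ together with the open-subgroup identity $B_\sigma|_{G_i}=B_{\sigma|_{G_i}}$, the equalities $B_\sigma|_{G_i}=B_{\pi_i}$ give $\sigma|_{G_i}$ weakly equivalent to $\pi_i$; in particular each $\sigma|_{G_i}$ factors through $C^*_{\pi_i}(G_i)$, so the commuting representations $\sigma|_{G_1},\sigma|_{G_2}$ determine a representation of $C^*_{\pi_1}(G_1)\otimes C^*_{\pi_2}(G_2)$ and hence a $C^*$-seminorm given by the displayed formula. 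The condition $B_\sigma\supset B_{\pi_1\times\pi_2}$ forces this seminorm to dominate $\|\cdot\|_{\min}$, so it is genuinely a norm. Injectivity and surjectivity of the assignment are then immediate, since the norm formula recovers $\alpha$ from $B_\sigma$ while $B_\sigma$ is the dual of the $\alpha$-completion, making the two maps mutually inverse.

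I expect the main obstacle to be the careful treatment of the restriction conditions: one must match the \emph{isometry} of the factor embeddings with the \emph{equality} (not merely the inclusion) of restricted coefficient spaces, which requires both directions of weak containment of $\sigma|_{G_i}$ against $\pi_i$ and a clean use of the duality between isometric $*$-monomorphisms and metric surjections of duals. Moreover, the identity $B_\sigma|_{G_i}=B_{\sigma|_{G_i}}$ is only available for \emph{open} subgroups, so some care is needed to pass from the guaranteed $A_\sigma|_{G_i}=A_{\sigma|_{G_i}}$ to the corresponding statement for the weak*-closures $B_\sigma$, which is where the discreteness present in the intended applications makes the argument go through cleanly.
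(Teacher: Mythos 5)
Your proposal is correct and follows essentially the same route as the paper: both identify $*$-representations of $C^*_{\pi_1}(G_1)\otimes C^*_{\pi_2}(G_2)$ with representations $\sigma$ of $G_1\times G_2$ (via $L^1(G_1\times G_2)=L^1(G_1)\widehat{\otimes}L^1(G_2)$, equivalently via pairs of commuting representations), read off the norm formula from $\sigma(f\times g)=\sigma|_{G_1}(f)\sigma|_{G_2}(g)$, and translate faithfulness into $B_\sigma\supset B_{\pi_1\times\pi_2}$ using the duality between weak containment and inclusion of $B$-spaces. The only cosmetic difference is that the paper works with the inclusions $B_\sigma|_{G_i}\subset B_{\pi_i}$ and lets the equalities follow from the containment condition, whereas you derive them from the cross-norm/isometric-embedding property; your closing worry about $B_\sigma|_{G_i}=B_{\sigma|_{G_i}}$ for non-open subgroups is a fair technical point that the paper's own terse proof also leaves implicit, but your duality argument (adjoint of an isometry is a metric surjection) in fact circumvents it.
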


\begin{proof}
Note that since $L^1(G_1\times G_2)=L^1(G_1)\widehat{\otimes}L^1(G_2)$, we may consider $*$-representations of $C^*_{\pi_1}(G_1)\otimes C^*_{\pi_2}(G_2)$ as being representations of $G_1\times G_2$. It can be checked that this gives a one-to-one correspondence between $*$-representations of $C^*_{\pi_1}(G_1)\otimes C^*_{\pi_2}(G_2)$ and representations $\sigma$ of $G_1\times G_2$ such that $B_\sigma|_{G_1}\subset B_{\pi_1}$ and $B_\sigma|_{G_2}\subset B_{\pi_2}$. Moreover, this immediately gives that if $\sigma$ is a representation of $G_1\times G_2$ corresponding to a $*$-representation $\widetilde \sigma$ of $C^*_{\pi_1}(G_1)\otimes C^*_{\pi_2}(G_2)$ and $f_1,\ldots,f_n\in L^1(G_1)$, $g_1,\ldots, g_n\in L^1(G_2)$, then
$$ \bigg\|\widetilde\sigma\big(\sum_{i=1}^n \pi_1(f_i)\otimes \pi_2(g_i)\big)\bigg\|=\bigg\|\sum_{i=1}^n \sigma(f_i\times g_i)\bigg\|.$$
Finally, a $*$-representation of $C^*_{\pi_1}(G_1)\otimes C^*_{\pi_2}(G_2)$ corresponding to a representation $\sigma$ of $G_1\times G_2$ separates points of $C^*_{\pi_1}(G_1)\otimes C^*_{\pi_2}(G_2)$ if and only if $\|\sigma(\cdot)\|\geq \|\pi_1\times \pi_2(\cdot)\|$ on $L^1(G_1\times G_2)$ if and only if $B_\sigma\supset B_{\pi_1\times \pi_2}$.
\end{proof}

\section{$C^*$-norms of tensor products}

In this section we show that if $\Gamma$ is a nonamenable discrete group, then $C^*_r(\Gamma)\otimes C^*_r(\Gamma)$ and $C^*(\Gamma)\otimes C^*_r(\Gamma)$ do not admit unique $C^*$-norms. We obtain even stronger results when $\Gamma$ contains a copy of the free group. The key idea behind our (remarkably simple) proofs is to find $B_\sigma$ spaces satisfying the conditions of Proposition \ref{characterization} which differ on the diagonal subgroup $\Delta$ of $\Gamma\times\Gamma$. We begin with the following lemma.

\begin{lemma}
Suppose $\Gamma$ is a discrete group and $\mc S$ is a subset of $B(\Gamma\times\Gamma)$ supported on the diagonal subgroup $\Delta$ of $\Gamma\times\Gamma$. Let $A_{\mc S}$ denote the norm closed translation invariant subspace of $B(\Gamma\times\Gamma)$ generated by $\mc S$. Then $A_{\mc S}|_{\Gamma\times \{e\}}\subset A(\Gamma\times \{e\})$ and $A_{\mc S}|_{\{e\}\times \Gamma}\subset A(\{e\}\times \Gamma)$.
\end{lemma}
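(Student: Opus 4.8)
The plan is to reduce everything to a single geometric observation about cosets of the diagonal, together with the fact that point masses on a discrete group lie in its Fourier algebra. Recall that $A_{\mc S}$ is, by definition, the norm closure of the linear span of the two-sided translates $(g,h)\mapsto u((a,b)^{-1}(g,h)(c,d))$ of the elements $u\in\mc S$ (for $(a,b),(c,d)\in\Gamma\times\Gamma$). The restriction map $B(\Gamma\times\Gamma)\to B(\Gamma\times\{e\})$ is linear and norm-decreasing (it is the restriction property $B_\pi|_H=B_{\pi|_H}$ applied to the open subgroup $H=\Gamma\times\{e\}$), and $A(\Gamma\times\{e\})$ is norm closed in $B(\Gamma\times\{e\})$. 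Hence it suffices to show that the restriction to $\Gamma\times\{e\}$ of each such translate of each $u\in\mc S$ lies in $A(\Gamma\times\{e\})$; taking linear combinations and then norm limits will give the containment for all of $A_{\mc S}$.

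So fix $u\in\mc S$, which by hypothesis vanishes off $\Delta$, and consider a translate $v(g,h)=u(a^{-1}gc,\,b^{-1}hd)$. Since $u$ is supported on the diagonal, $v(g,h)\neq 0$ forces $a^{-1}gc=b^{-1}hd$, so $v$ is supported on a coset of $\Delta$ in $\Gamma\times\Gamma$. The key point is that every coset of $\Delta$ meets $\Gamma\times\{e\}$ in at most one point, because $\Delta\cap(\Gamma\times\{e\})=\{(e,e)\}$ and cosets are translates of $\Delta$. Concretely, setting $h=e$ in the support condition gives $g=ab^{-1}dc^{-1}$, a single group element. Therefore the restriction $v|_{\Gamma\times\{e\}}$ is supported at one point, i.e. it is a scalar multiple of a point mass $\delta_{g_0}$ (or is $0$). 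Since $\Gamma$ is discrete, $\delta_e$ is the coefficient function $s\mapsto\langle\lambda(s)\delta_e,\delta_e\rangle$ of the left regular representation and hence lies in $A(\Gamma)$; translating, $\delta_{g_0}\in A(\Gamma)$ for every $g_0$. Identifying $\Gamma\times\{e\}$ with $\Gamma$, this places $v|_{\Gamma\times\{e\}}$ in $A(\Gamma\times\{e\})$, which is exactly the reduction from the first paragraph.

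The only step that requires genuine care is the passage to the norm closure, and this is where I would be explicit: an arbitrary element of $A_{\mc S}$ restricts to a norm limit in $B(\Gamma\times\{e\})$ of finitely supported functions, each of which lies in $A(\Gamma\times\{e\})$ by the previous paragraph; because restriction is a contraction and $A(\Gamma\times\{e\})$ is closed, the limit remains in $A(\Gamma\times\{e\})$. (The underlying mechanism, that finitely supported functions on a discrete group sit inside the closed space $A(\Gamma)$, is what makes the closure harmless.) The statement for $\{e\}\times\Gamma$ is proved identically, simply interchanging the two coordinates, since a coset of $\Delta$ likewise meets $\{e\}\times\Gamma$ in at most one point.
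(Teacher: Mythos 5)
Your proof is correct and follows essentially the same route as the paper's: translates of a function supported on $\Delta$ restrict to $\Gamma\times\{e\}$ with support in at most one point, hence lie in $A(\Gamma\times\{e\})$, and the containment passes to the closed translation-invariant span because restriction is contractive and $A(\Gamma\times\{e\})$ is norm closed. The only difference is that you spell out the closure step and the fact that point masses lie in the Fourier algebra of a discrete group, which the paper leaves implicit.
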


\begin{proof}
Let $\varphi\in \mc S$ and fix $s=(s_1,s_2)$, $t=(t_1,t_2)$ in $\Gamma\times\Gamma$. Then $\varphi(s(x_1,e)t)$ is nonzero only if $s_1x_1t_1=s_2t_2$, i.e., only if $x_1=s_1^{-1}s_2t_2t_1^{-1}$. Therefore, the translated element $x\mapsto \varphi(sxt)$ in $B(\Gamma\times\Gamma)$ has finite support when restricted to $\Gamma\times\{e\}$ and, thus, its restriction is an element of $A(\Gamma\times\{e\})$. So $A_\mc S |_{\Gamma\times\{e\}}\subset A(\Gamma\times\{e\})$. Similarly, $A_\mc S |_{\{e\}\times\Gamma}\subset A(\{e\}\times\Gamma)$.
\end{proof}

We are now prepared to prove our first main theorem, i.e., that $C^*_r(\Gamma)\otimes C^*_r(\Gamma)$ and $C^*(\Gamma)\otimes C^*_r(\Gamma)$ do not admit unique $C^*$-norms. We use a single argument conducted in the broader setting of all intermediate group $C^*$-algebras on $\Gamma$ between the reduced and full group $C^*$-algebras.

\begin{theorem}\label{1}
Let $\Gamma$ be a nonamenable discrete group and $\pi$ a representation of $\Gamma$ weakly containing the left regular representation. Then $C^*_\pi(\Gamma)\otimes C^*_r(\Gamma)$ does not admit a unique $C^*$-norm.
\end{theorem}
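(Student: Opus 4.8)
The plan is to read off the statement from the correspondence in Proposition \ref{characterization}: $C^*$-norms on $C^*_\pi(\Gamma)\otimes C^*_r(\Gamma)$ are in bijection with the weak*-closed translation invariant subspaces $B_\sigma\subset B(\Gamma\times\Gamma)$ satisfying $B_\sigma|_{\Gamma\times\{e\}}=B_\pi$, $B_\sigma|_{\{e\}\times\Gamma}=B_\lambda$, and $B_\sigma\supset B_{\pi\times\lambda}$ (here $\lambda$ is the left regular representation, so $C^*_r=C^*_\lambda$). The smallest such space, $B_{\pi\times\lambda}$, already satisfies these conditions and corresponds to the minimal norm; so it suffices to exhibit one further space $B_\sigma$ meeting the three conditions but strictly larger. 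I would detect the strict enlargement by restricting to the diagonal $\Delta=\{(g,g):g\in\Gamma\}\cong\Gamma$, which is an open subgroup since $\Gamma$ is discrete.

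First I would manufacture a coefficient function living on the diagonal. As $\Delta$ is an open subgroup, the constant function $1\in B(\Delta)$ extends via the open-subgroup extension isometry to $1_\Delta\in B(\Gamma\times\Gamma)$, the indicator of $\Delta$. Set $\mc S=\{1_\Delta\}$, let $A_{\mc S}$ be the norm-closed translation invariant subspace it generates, and choose a representation $\rho$ of $\Gamma\times\Gamma$ with $A_\rho=A_{\mc S}$ (every norm-closed translation invariant subspace arises this way). Then put $\sigma=(\pi\times\lambda)\oplus\rho$, so that $B_\sigma=B_{\pi\times\lambda}+B_\rho\supset B_{\pi\times\lambda}$, giving the third condition for free.

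Next I would check the two restriction conditions hold with equality. By the preceding lemma, $A_{\mc S}|_{\Gamma\times\{e\}}\subset A(\Gamma\times\{e\})$ and $A_{\mc S}|_{\{e\}\times\Gamma}\subset A(\{e\}\times\Gamma)$. Using that these factors are open and closed, $B_\rho|_{\Gamma\times\{e\}}=B_{\rho|_{\Gamma\times\{e\}}}$ is the weak*-closure of $A_{\mc S}|_{\Gamma\times\{e\}}\subset A(\Gamma\times\{e\})$, hence is contained in $B_\lambda$, and similarly on the second factor. Since $\pi$ weakly contains $\lambda$ we have $B_\lambda\subset B_\pi$, so the $B_\rho$ contribution is absorbed: $B_\sigma|_{\Gamma\times\{e\}}=B_\pi+B_\rho|_{\Gamma\times\{e\}}=B_\pi$ and $B_\sigma|_{\{e\}\times\Gamma}=B_\lambda$. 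Thus $B_\sigma$ satisfies all hypotheses of Proposition \ref{characterization}.

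Finally I would separate $B_\sigma$ from $B_{\pi\times\lambda}$ on the diagonal, where nonamenability enters. Under the identification $\Delta\cong\Gamma$, the restriction $(\pi\times\lambda)|_\Delta$ is $\pi\otimes\lambda$, which by Fell's absorption is an amplification of $\lambda$; hence $B_{\pi\times\lambda}|_\Delta=B_\lambda$, and this does \emph{not} contain the constant $1$ precisely because $\Gamma$ is nonamenable. On the other hand $1_\Delta|_\Delta=1$, so $1\in A_\rho|_\Delta\subset B_\sigma|_\Delta$. Therefore $B_\sigma|_\Delta\neq B_{\pi\times\lambda}|_\Delta$, whence $B_\sigma\neq B_{\pi\times\lambda}$, and the correspondence yields two distinct $C^*$-norms. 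The main obstacle is the bookkeeping in the restriction step: ensuring the enlargement by $B_\rho$ is invisible on each coordinate factor (so the restriction conditions survive as equalities) while remaining visible on the diagonal — which is exactly what the lemma and Fell's absorption are arranged to guarantee.
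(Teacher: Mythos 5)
Your proposal is correct and follows essentially the same route as the paper: both enlarge $B_{\pi\times\lambda}$ by coefficient functions supported on the diagonal, verify the restriction conditions of Proposition \ref{characterization} via the lemma and weak containment of $\lambda$ in $\pi$, and separate the two norms on $\Delta$ using Fell's absorption plus Hulanicki's characterization of amenability. The only (cosmetic) difference is that you generate the enlargement from the single function $1_\Delta$ and realize it as a direct summand $\rho$, whereas the paper takes $\mc S$ to be all of $B(\Gamma\times\Gamma)$ supported on $\Delta$ and passes to the weak*-closure of $A_{\mc S}+A_{\pi\times\lambda}$.
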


\begin{proof}
By Proposition \ref{characterization}, it suffices to construct two distinct weak*-closed translation invariant subspaces $B_\sigma$ of $B(G)$ with the prescribed conditions.

We first consider the space $B_{\pi\times\lambda}$ associated to the minimal tensor product $C^*_\pi(\Gamma)\otimes_{\min} C^*_r(\Gamma)$. Notice that on the diagonal subgroup $\Delta$ of $\Gamma\times\Gamma$, the space $A_{\pi\times\lambda}$ restricts to $A_{\pi\otimes\lambda}$. By Fell's absorption principle, $\pi\otimes \lambda$ is unitarily equivalent to an amplification of $\lambda$ and, hence, $A_{\pi\times \lambda}|_\Delta$ is simply the Fourier algebra $A(\Delta)$. Since $\Gamma\cong \Delta$ is nonamenable, this implies that $B_{\pi\times\lambda}|_\Delta$ does not contain the constant 1.

Let $\mc S$ be the set of all elements of $B(\Gamma\times\Gamma)$ supported on $\Delta$. Then the weak*-closure $B_\sigma$ of $A_\mc S+A_{\pi\times\lambda}$ satisfies the conditions of Proposition \ref{characterization} as $B_{\sigma}|_{\Gamma\times \{e\}}=B_\pi+\overline{A_{\mc S}|_{\Gamma\times\{e\}}}^{w^*}=B_\pi$ since $A_{\mc S}|_{\Gamma\times\{e\}}=A(\Gamma\times\{e\})$ and $\pi$ weakly contains $\lambda$. Similarly, $B_{\sigma}|_{\{e\}\times\Gamma}=B_\lambda$. As $B_\sigma|_\Delta$ contains the constant 1, we conclude that $C^*_\pi(\Gamma)\otimes C^*_r(\Gamma)$ does not admit a unique $C^*$-norm.
\end{proof}

We now focus on improving this result in the case when $\Gamma$ contains a copy of a noncommutative free group. Towards this goal, we briefly review the theory of $\ell^p$-representations recently introduced by Brown and Guentner \cite{bg}.

A representation $\pi\fn \Gamma\to \mc B(\Hi)$  is said to be an $\ell^p$-representation of $\Gamma$ if $\Hi$ admits a dense linear subspace $\Hi_0$ such that $\pi_{x,x}\in\ell^p(\Gamma)$ for every $x\in \Hi$. For example $\lambda$ is an $\ell^p$-representation for every $p$ since $c_c(\Gamma)\subset \ell^2(\Gamma)$ clearly satisfies the required conditions. A useful fact to us will be that if $\pi$ is an $\ell^p$-representation of $\Gamma$ and $\sigma$ is an arbitrary representation of $\Gamma$, then $\pi\otimes\sigma$ is an $\ell^p$-representation.

Let $\|\cdot\|_{\ell^p}$ be the $C^*$-norm on $\C[\Gamma]$ defined by
$$ \|x\|_{\ell^p}=\sup\{\|\pi(x)\| : \pi\tn{ is an $\ell^p$-representation of }\Gamma\}.$$
$C^*_{\ell^p}(\Gamma)$ is defined to be the completion of $(\C[\Gamma],\|\cdot\|_{\ell^p})$ and $C^*_{\ell^p}(\Gamma)$ admits a faithful $\ell^p$-representation. Brown and Guentner demonstrated that $C^*_{\ell^p}(\Gamma)$ is simply the reduced group $C$*-algebra $C^*_r(\Gamma)$ when $p\leq 2$ but were able to show that there exists $p>2$ so that $C^*_{\ell^p}(\F_2)$ was neither the reduced nor full group $C$*-algebra. Shortly after, Rui Okayasu demonstrated that the canonical quotient map from $C^*_{\ell^p}(\F_2)$ to $C^*_{\ell^q}(\F_2)$ is not injective for $2\leq q\leq p\leq\infty$ \cite[Corollary 3.2]{o}.

Suppose that $H$ is a subgroup of $\Gamma$. If $\pi$ is an $\ell^p$-representation of $H$, then $\mathrm{Ind}_H^\Gamma\pi$ is an $\ell^p$-representation of $\Gamma$ \cite[Theorem 2.4]{w}. It follows that if $A_{\ell^p}(\Gamma)$ is the closed linear span of coefficients of $\ell^p$-representations of $\Gamma$, then $A_{\ell^p}(\Gamma)|_H=A_{\ell^p}(H)$.

We remark that we may replace $C^*_r(\Gamma)$ with $C^*_{\ell^p}(\Gamma)$ in Theorem \ref{1} by using the fact that if $\sigma$ is an $\ell^p$-representation of $\Gamma$, then $\pi\otimes\sigma$ is an $\ell^p$-representation for all representations $\pi$ of $\Gamma$ in place of Fell's absorption principle. We include this case in the statement of our final theorem.

\begin{theorem}\label{2}
Let $\Gamma_1$ and $\Gamma_2$ be discrete groups containing copies of noncommutative free groups. If $\pi$ is a representation of $\Gamma_1$ weakly containing a copy of the left regular representation, then $C^*_\pi(\Gamma_1)\otimes C^*_{\ell^p}(\Gamma_2)$ admits $2^{\aleph_0}$ distinct $C^*$-norms for every $p\in [2,\infty)$.
\end{theorem}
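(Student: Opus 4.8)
The plan is to imitate the proof of Theorem \ref{1}, replacing the single extra space (the one that forced the constant $1$ onto the diagonal) by a whole one-parameter family indexed by an exponent $q$, whose pairwise distinctness will be supplied by Okayasu's theorem that the completions $C^*_{\ell^q}(\F_2)$, equivalently the coefficient spaces $B_{\ell^q}(\F_2)$, are mutually distinct for $q\in[2,\infty]$ \cite{o}. Fix embeddings of a noncommutative free group $\F$ into $\Gamma_1$ and into $\Gamma_2$ and let $\Lambda\cong\F$ be the resulting diagonally embedded copy inside $G:=\Gamma_1\times\Gamma_2$; the point of using a diagonal copy is that both coordinate projections restrict to injections on $\Lambda$, which is exactly the feature that makes the preceding Lemma applicable. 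Writing $\sigma_p$ for the universal $\ell^p$-representation of $\Gamma_2$ (so that $C^*_{\sigma_p}(\Gamma_2)=C^*_{\ell^p}(\Gamma_2)$ and $B_{\sigma_p}=B_{\ell^p}(\Gamma_2)$), I would produce, for each $q\in[p,\infty]$, a space $B_{\sigma_q}$ meeting the three requirements of Proposition \ref{characterization} and restricting on $\Lambda$ to $B_{\ell^q}(\Lambda)$. Since distinct $q$ yield distinct restrictions, the spaces $B_{\sigma_q}$ are pairwise distinct, and Proposition \ref{characterization} converts the continuum $[p,\infty]$ of them into $2^{\aleph_0}$ distinct $C^*$-norms.

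For the construction I would take $\mc S_q$ to be the set of all elements of $B(G)$ supported on $\Lambda$ whose restriction to $\Lambda$ lies in $A_{\ell^q}(\Lambda)$ (these zero-extensions exist and are isometric because $\Lambda$ is open), let $A_{\mc S_q}$ be the closed translation-invariant subspace they generate, and set $B_{\sigma_q}$ equal to the weak*-closure of $A_{\mc S_q}+A_{\pi\times\sigma_p}$. The containment $B_{\sigma_q}\supset B_{\pi\times\sigma_p}$ demanded by Proposition \ref{characterization} is then immediate. For the boundary conditions, the Lemma gives $A_{\mc S_q}|_{\Gamma_1\times\{e\}}\subset A(\Gamma_1\times\{e\})$ and $A_{\mc S_q}|_{\{e\}\times\Gamma_2}\subset A(\{e\}\times\Gamma_2)$; since $\pi$ weakly contains $\lambda$ we have $A(\Gamma_1\times\{e\})\subset B_\pi$, and since $p\geq 2$ we have $A(\{e\}\times\Gamma_2)=A_{\ell^2}(\{e\}\times\Gamma_2)\subset B_{\ell^p}(\Gamma_2)$. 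Thus adjoining $A_{\mc S_q}$ does not enlarge either boundary restriction, so $B_{\sigma_q}|_{\Gamma_1}=B_\pi$ and $B_{\sigma_q}|_{\Gamma_2}=B_{\ell^p}(\Gamma_2)$, precisely as in Theorem \ref{1}.

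It then remains to compute $B_{\sigma_q}|_\Lambda$. The minimal part is controlled by the stated $\ell^p$-calculus: under the identification $\Lambda\cong\F$ one has $(\pi\times\sigma_p)|_\Lambda=\pi|_\F\otimes\sigma_p|_\F$, and since $\sigma_p|_\F$ is again an $\ell^p$-representation (restriction to a subgroup preserves $\ell^p$) and tensoring an $\ell^p$-representation by an arbitrary representation stays $\ell^p$, we get $A_{\pi\times\sigma_p}|_\Lambda\subset A_{\ell^p}(\Lambda)\subset A_{\ell^q}(\Lambda)$ for $q\geq p$. On the other hand $A_{\mc S_q}|_\Lambda\supset A_{\ell^q}(\Lambda)$ by construction, so after taking weak*-closures $B_{\sigma_q}|_\Lambda\supset B_{\ell^q}(\Lambda)$. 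Granting the reverse inclusion $A_{\mc S_q}|_\Lambda\subset A_{\ell^q}(\Lambda)$, we obtain $B_{\sigma_q}|_\Lambda=B_{\ell^q}(\Lambda)$, and Okayasu's theorem makes these restrictions, hence the spaces $B_{\sigma_q}$, pairwise distinct as $q$ ranges over $[p,\infty]$, which finishes the argument.

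The crux, and the step I expect to be the main obstacle, is exactly this reverse inclusion $A_{\mc S_q}|_\Lambda\subset A_{\ell^q}(\Lambda)$: unlike the restrictions to the coordinate slices, the restriction to $\Lambda$ of an arbitrary translate of a function supported on $\Lambda$ need not have finite support, so the Lemma gives nothing here. I would handle it by noting that for $s,t\in G$ the set $\{x\in\Lambda:sxt\in\Lambda\}$ is a coset of a subgroup $M\leq\Lambda$, whence the restriction to $\Lambda$ of a translate of such a $\dot u$ is, up to translation, the extension by zero to $\Lambda$ of $u|_M$; one then verifies that $A_{\ell^q}(\Lambda)$ is stable under restriction to a subgroup followed by extension by zero, using $A_{\ell^q}(\Lambda)|_M=A_{\ell^q}(M)$ together with the fact that $\mathrm{Ind}_M^\Lambda$ sends $\ell^q$-representations to $\ell^q$-representations, which is what realizes a zero-extended coefficient of $M$ as a coefficient of an $\ell^q$-representation of $\Lambda$. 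Equivalently, and perhaps more transparently, I could dispense with the extension-by-zero picture and instead take $\mathrm{Ind}_\Lambda^G\rho_q$, with $\rho_q$ the universal $\ell^q$-representation of $\Lambda$, reading off all three computations from Mackey's restriction formula: the intersections $(\Gamma_i\times\{e\})\cap g\Lambda g^{-1}$ are trivial, forcing each slice restriction to be a multiple of the regular representation, while the identity double coset contributes $\rho_q$ on $\Lambda$ and every other double coset contributes an induced, hence still $\ell^q$, representation of $\Lambda$, giving $A_{\mathrm{Ind}_\Lambda^G\rho_q}|_\Lambda=A_{\ell^q}(\Lambda)$ directly.
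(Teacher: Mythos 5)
Your argument is correct and follows the same architecture as the paper's proof: a diagonal copy $\Delta\cong\F_2$ inside $\F_2\times\F_2\leq\Gamma_1\times\Gamma_2$, a one-parameter family $B_{\sigma_q}$ obtained by adjoining diagonal $\ell^q$-data to $A_{\pi\times\sigma}$, the Lemma to keep the coordinate restrictions unchanged, and Okayasu's theorem to separate the restrictions $B_{\sigma_q}|_\Delta=B_{\ell^q}(\Delta)$. The one place you diverge is the definition of $\mc S_q$, and it is precisely this choice that creates the "crux" you identify. The paper takes $\mc S_q$ to be the functions in $A_{\ell^q}(\Gamma_1\times\Gamma_2)$ supported on $\Delta$; since $A_{\ell^q}(\Gamma_1\times\Gamma_2)$ is already a norm-closed translation-invariant space, one gets $A_{\mc S_q}\subset A_{\ell^q}(\Gamma_1\times\Gamma_2)$ for free, and then $A_{\mc S_q}|_\Delta\subset A_{\ell^q}(\Gamma_1\times\Gamma_2)|_\Delta=A_{\ell^q}(\Delta)$ follows from the induction fact $A_{\ell^q}(G)|_H=A_{\ell^q}(H)$ with no further work; the reverse containment holds because zero-extensions of coefficients of $\ell^q$-representations of $\Delta$ are coefficients of the induced (hence $\ell^q$) representations of $\Gamma_1\times\Gamma_2$. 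Your $\mc S_q$ (all of $B(G)$ supported on $\Delta$ with restriction in $A_{\ell^q}(\Delta)$) is a priori larger, so you must prove the upper bound by hand via the coset/conjugation analysis or Mackey restriction; that argument does go through (the solution set $\{x\in\Delta: sxt\in\Delta\}$ is indeed a coset of $\Delta\cap s^{-1}\Delta s$, and $A_{\ell^q}$ is stable under restriction, conjugation, translation, and zero-extension from subgroups), but the paper's definition dissolves the obstacle entirely. Your Mackey-theoretic alternative via $\mathrm{Ind}_\Delta^{G}\rho_q$ is a clean equivalent formulation of what the paper is doing implicitly.
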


\begin{proof}
Choose a faithful $\ell^p$-representation $\sigma$ for $C^*_{\ell^p}(\Gamma)$, identify a copy of $\F_2$ inside each of $\Gamma_1$ and $\Gamma_2$. Denote the diagonal subgroup of $\F_2\times\F_2\leq \Gamma_1\times \Gamma_2$ by $\Delta$. Then $(\pi\times \sigma)|_\Delta=\pi|_{\F_2}\otimes\sigma|_{\F_2}$ is an $\ell^p$-representation of $\Delta$ implies that $A_{\pi\times\sigma}|_\Delta\subset A_{\ell^p}(\Delta)$.

For each $q>p$, let $\mc S_q$ be the set of all functions in $A_{\ell^q}(\Gamma_1\times\Gamma_2)$ supported on $\Delta$. Then $A_{\mc S_q}|_{\Delta}= A_{\ell^q}(\Delta)$.
Let $B_{\sigma_q}$ be the weak*-closure of $A_{\mc S_q}+A_{\pi\times\sigma}$. Then, by similar reasoning as in the proof of the previous theorem, $B_\sigma$ satisfies the conditions of Proposition \ref{characterization} and $B_{\sigma_q}|_\Delta=B_{\ell^q}(\Delta)$. Since the $C^*$-norms $\|\cdot\|_{\ell^p}$ on $\ell^1(\Delta)$ are distinct for every $q>p$, we conclude that $B_{\sigma_q}$ are distinct for every $q>p$.
\end{proof}

\begin{remark}
Let $G$ be a locally compact group containing an open normal compact subgroup $K$. This happens, for instance, when $G$ is a totally disconnected SIN group. Let $q\fn G\to G/K$ be the canonical quotient map. If $m_K$ is the normalized Haar measure on $K$, then $\varphi\mapsto m_K*\varphi$ is a contraction on $B(G)$ \cite[Proposition 2.18]{e} mapping $A(G)$ onto $A(G/K)\circ q$ since $A(G)=\overline{B(G)\cap C_c(G)}$ and $\varphi\in C_c(G)$ if and only if $q(\mathrm{supp}\,\varphi)$ is finite. It follows that $m_K*B_{\lambda_G}=B_{\lambda_{G/K}}\circ q$.

Suppose $G$ is nonamenable. Then the analogue of Theorem \ref{1} is true for $G$, i.e., if $\pi$ is any representation of $G$ weakly containing $\lambda_G$, then $C^*_\pi(G)\otimes C^*_r(G)$ does not admit a unique $C^*$-norm. Indeed, denote the diagonal subgroup of $G/K\times G/K$ by $\Delta$. Then, making the appropriate changes to the proof of Theorem \ref{1} and taking $\mc S$ to be the set of all elements in $B(G/K\times G/K)\circ q$ supported on $q^{-1}(\Delta)$ produces a second $B_\sigma$ satisfying the conditions of Proposition \ref{characterization}. Suppose $G_1$ and $G_2$ are two locally compact groups containing open normal compact subgroups $K_1$ and $K_2$ so that $G/K_1$ and $G/K_2$ contain noncommutative free subgroups. Then a similar trick as above shows that $C^*_\pi(G_1)\otimes C^*_r(G_2)$ admits $2^{\aleph_0}$ distinct $C^*$-norms for every representation $\pi$  of $G_1$ weakly containing $\lambda_{G_1}$.
\end{remark}

\section*{Acknowledgements}\label{ackref}
The author would like to thank his supervisor Nico Spronk for suggesting this problem. The author also wishes to thank both his advisors, Brian Forrest and Nico Spronk, for the useful discussions and suggestions.
This research was conducted at the Fields Institute during the thematic program on Abstract Harmonic Analaysis, Banach and Operator Algebras while the author was supported by an NSERC Postgraduate Scholarship.

\end{document}